\theoremstyle{definition}
\theoremstyle{remark}
\theoremstyle{plain}
\newtheorem{thm}{Theorem}
\newtheorem{lem}[thm]{Lemma}
\newtheorem{cor}[thm]{Corollary}
\newtheorem*{pro}{Problem 1}
\newcommand{\al}{\alpha}
\newcommand{\mb}[1]{\mathbb{#1}}
\newcommand{\mbf}[1]{\mathbf{#1}}
\newcommand{\mcal}[1]{\mathcal{#1}}
\newcommand{\W}{W(\psi)}
\newcommand{\w}{W(\mcal M_0, \psi)}
\newcommand{\wrp}{W'(\mcal M_0, \psi)}
\newcommand{\wds}{W''(\mcal M_0, \psi)}
\begin{document}

\title[A Jarn\'ik type theorem]{A Jarn\'ik type theorem for planar curves:\\ everything about the parabola}
\thanks{Research partially supported by the Danish council for independent research and the Australian
research council}
\author{M. HUSSAIN}

\address{School of Mathematical and Physical Sciences, The University of Newcastle,
Callaghan, NSW 2308, Australia.}

\email{mumtaz.hussain@newcastle.edu.au}

\subjclass[2010]{Primary: 11J83; Secondary 11J13, 11K60}
\keywords{Metric Diophantine approximation; planar curves; extremal manifolds; Jarn\'ik type theorem}
\begin{abstract}
The well known theorems of Khintchine and Jarn\'ik in metric Diophantine approximation provide a comprehensive description of the measure
theoretic properties of real numbers approximable by rational numbers with a given error. Various generalisations of these fundamental
results have been obtained for other settings, in particular, for curves and more generally manifolds. In this paper we develop the theory
for planar curves by completing the theory in the case of parabola. This represents the first comprehensive study of its kind in the theory of Diophantine approximation on manifolds.
\end{abstract}

\maketitle

\section{Introduction}

Classical metric Diophantine approximation deals quantitatively with the density of the rational numbers in the real numbers.
In higher dimensions, the theory for systems of linear forms combines two main types of Diophantine approximation: simultaneous and dual.
The simultaneous case involves approximating points $\mbf x=(x_1,\dots, x_n)\in \mb R^n$ by rational points $\{\mbf a/a_0:(\mbf a, a_0)\in \mb Z^n\times \mb N\}$
and the dual case involves approximating points $\mbf x\in \mb R^n$ by rational
hyperplanes $a_1x_1+\cdots+a_nx_n=a_0$, where $(a_0, \mbf a)\in\mb Z\times \mb Z^n\setminus\{\mbf 0\}$.
 In this paper we will  deal mainly with the dual form of approximation and only briefly touch upon the simultaneous form in order to mention relevant results.

\subsection{Dual Diophantine approximation}

To set the scene for the problems considered in this paper, we first recall the fundamental results in the theory of dual
Diophantine approximation. Let $\psi:\mb R^+\to \mb R^+$ denote a real positive decreasing function.
We refer to $\psi$ as an \emph{approximating function.} Define the set
\begin{equation*}
  \W:=\left\{\mbf x=(x_1,\dots,x_n)\in\mb R^n:\begin{array}{l}
  |a_0+a_1x_1+\cdots+a_nx_n|<\psi(|\mbf a|) \\[1ex]
  \text{for} \ \ i.m. \ (a_0, \mbf a)\in\mb Z\times \mb Z^n\setminus\{\mbf 0\}
                           \end{array}
\right\},
\end{equation*}
where  $`i.m.$' stands for `infinitely many' and $|\mbf a|=\max\{|a_1|,\dots, |a_n|\}$ is the standard
supremum norm. A vector $\mbf x\in \W$ will be called {\em $\psi-$approximable}. In the case $\psi(r)=r^{-\tau}$ for
 some $\tau>0$ we also say that $\mbf x$ is {\em $\tau-$approximable} and denote $W(\psi)$ by $W(\tau)$.
 The first significant result in the theory is Dirichlet's theorem which tells us that $W(n)=\mb R^n$.

The following fundamental result provides a beautiful criterion for the `size' of the set $\W$ in terms of $n$-dimensional Lebesgue measure $|\cdot|_n$.

\begin{thm}[Khintchine-Groshev]\label{thm1}
  Let $\psi$ be an approximating function. Then
$$|\W|_n=\begin {cases}
 0 \ & {\rm if } \quad \sum\limits_{q=1}^{\infty}\psi(q)q^{n-1}<\infty, \\[2ex]
 Full \ & {\rm if } \quad \sum\limits_{q=1}^{\infty}\psi(q)q^{n-1}=\infty.
\end {cases}$$
\end{thm}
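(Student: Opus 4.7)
The plan is to split the theorem into its two halves and, at the outset, reduce attention to the unit cube: since $\W$ is invariant under translation by $\mb Z^n$, one may replace it by $\W\cap[0,1]^n$. I would then invoke a Cassels-type zero-one law guaranteeing $|\W\cap[0,1]^n|_n\in\{0,1\}$, so it suffices to prove null measure under the convergence condition and positive measure under the divergence condition.

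For the convergence half, the classical Borel--Cantelli lemma applies directly to the $\limsup$ set. Setting
$$A_{\mbf a}=\left\{\mbf x\in[0,1]^n\,:\,|a_0+a_1x_1+\cdots+a_nx_n|<\psi(|\mbf a|)\text{ for some }a_0\in\mb Z\right\},$$
an elementary geometric argument shows that $A_{\mbf a}$ is a union of at most $O(|\mbf a|)$ hyperplane slabs, each of $n$-dimensional Lebesgue measure at most $c\,\psi(|\mbf a|)/|\mbf a|$; hence $|A_{\mbf a}|_n\ll\psi(|\mbf a|)$. Grouping by $q=|\mbf a|$ and summing over the $O(q^{n-1})$ vectors of that supremum norm gives $\sum_{|\mbf a|=q}|A_{\mbf a}|_n\ll q^{n-1}\psi(q)$, so convergence of $\sum q^{n-1}\psi(q)$ forces $|\W|_n=0$.

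The divergence direction is the substantive part. The natural route is the divergence form of Borel--Cantelli (Chung--Erd\H{o}s, or Gallagher's variant), which requires a quasi-independence estimate of the form
$$|A_{\mbf a}\cap A_{\mbf a'}|_n\ll |A_{\mbf a}|_n\,|A_{\mbf a'}|_n\qquad (\mbf a, \mbf a'\text{ linearly independent}),$$
together with a matching lower bound $|A_{\mbf a}|_n\gg\psi(|\mbf a|)$ (valid whenever $\psi(|\mbf a|)\le 1$, which may be arranged by discarding an irrelevant tail). Divergence of the series and this input then yield $|\W\cap[0,1]^n|_n>0$, and the zero-one law promotes positivity to full measure.

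The principal obstacle is the cross-intersection estimate. For linearly independent $\mbf a,\mbf a'$ one fixes a pair of integers $(a_0,a_0')$ and observes that the two corresponding slabs intersect $[0,1]^n$ in a parallelepiped of volume $\ll \psi(|\mbf a|)\psi(|\mbf a'|)/(|\mbf a|\cdot|\mbf a'|)$; summing over the $O(|\mbf a|\cdot|\mbf a'|)$ admissible pairs $(a_0,a_0')$ for which both slabs meet the cube gives the desired bound. The degenerate case of parallel $\mbf a$ and $\mbf a'$ needs a separate primitivity reduction (write both as integer multiples of a common primitive lattice vector), and its contribution, once summed, is of lower order and is absorbed into the main estimate. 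Feeding both pieces into Chung--Erd\H{o}s closes the divergence half and completes the proof.
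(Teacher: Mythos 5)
First, a point of calibration: the paper does not prove Theorem~\ref{thm1} at all. It is quoted as classical background, attributed to Khintchine ($n=1$), Groshev ($n>1$) and Schmidt, and the text only remarks that the convergence half is an easy consequence of the Borel--Cantelli lemma. So your proposal has to be judged on its own merits. Your architecture (reduction to the unit cube, a zero--one law, first Borel--Cantelli for convergence, quasi-independence plus a divergence Borel--Cantelli for the other half) is the standard one, and the convergence half as you sketch it is correct.

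The divergence half has two genuine gaps. (i) The per-pair volume bound is false as stated: for linearly independent but nearly parallel $\mathbf a,\mathbf a'$, the intersection of the slabs $|a_0+\mathbf a\cdot\mathbf x|<\psi(|\mathbf a|)$ and $|a_0'+\mathbf a'\cdot\mathbf x|<\psi(|\mathbf a'|)$ with the cube carries an extra factor $1/\sin\theta$, where $\theta$ is the angle between the normals, and $\sin\theta$ can be as small as $(|\mathbf a|\,|\mathbf a'|)^{-1}$; a single pair $(a_0,a_0')$ can then already contribute on the order of $|A_{\mathbf a}|_n\,|A_{\mathbf a'}|_n$. One can compensate because far fewer than $|\mathbf a|\,|\mathbf a'|$ pairs $(a_0,a_0')$ are then admissible, but your sketch uses the (wrong) per-pair bound together with the (then grossly excessive) pair count, so the estimate does not close as written. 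The clean standard route is orthogonality of characters: for non-proportional integer vectors one has the exact identity $|A_{\mathbf a}\cap A_{\mathbf a'}|_n=|A_{\mathbf a}|_n\,|A_{\mathbf a'}|_n$, because $\int_{[0,1]^n}e^{2\pi i(m\mathbf a+m'\mathbf a')\cdot\mathbf x}\,d\mathbf x$ vanishes unless $m\mathbf a+m'\mathbf a'=0$. (ii) More seriously, the proportional case is not ``of lower order'': when $n=1$ every pair of indices is proportional, so that case is the \emph{entire} theorem, and it is exactly where monotonicity of $\psi$ must enter -- by the Duffin--Schaeffer counterexample the $n=1$ divergence statement is false for general non-monotonic $\psi$, so no argument that treats proportional pairs as an absorbable error can be correct. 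For $n\ge2$ a reduction to primitive vectors does dispose of proportional pairs (a positive proportion of vectors of each height are primitive, so divergence of the series survives the restriction), but the $n=1$ case needs the genuinely different, gcd-sensitive overlap estimates of Khintchine's original argument.
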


Here `full'  means that the complement of the set is of zero measure. The convergent case is an easy consequence
of the Borel-Cantelli lemma from probability theory. Therefore, the main substance of the theorem lies in the divergent part.
 The above theorem is a refined version of a combination of two separate results due to Khintchine \cite{K24} for the
 case $n=1$ and for any $n>1$ by Groshev \cite{Groshev}. In their original statements there were stronger assumptions on the
  approximating functions which were removed by Schmidt \cite{Schmidt60}. See also \cite{BV10} for the best version of the Khintchine-Groshev theorem with non-monotonic $\psi$.

A generalization of the above theorem in terms of Hausdorff measures was proved by Jarn\'ik \cite{Jar31} for $n=1$ and
by Dickinson and Velani \cite{DV} for arbitrary $n$. In what follows, $\mcal H^g$ denotes the $g$-dimensional Hausdorff
measure, where $g$ is a \textit{dimension function}, that is an increasing, continuous function $g:\mb R^+\rightarrow \mb R^+$
such that $g(r)\to 0$ as $r\to 0$. For a brief account of Hausdorff measure and dimension see \S\ref{hm}.

\begin{thm}[Jarn\'ik-Dickinson-Velani]\label{thm2}
Let $\psi$ be an approximating function and let $g$ be a dimension function such that $q^{-n}g(q)\to\infty$ as $q\to 0$ and $q^{-n}g(q)$ is
 decreasing. Suppose further that $q^{1-n}g(q)$ is increasing, then
$$\mcal H^g(\W)=\begin {cases}
 0 \ & {\rm if } \quad \sum\limits_{q=1}^{\infty}g\left(\dfrac{\psi(q)}{q}\right)\psi(q)^{1-n}q^{2n-1}<\infty, \\[3ex]
 \infty \ & {\rm if } \quad \sum\limits_{q=1}^{\infty}g\left(\dfrac{\psi(q)}{q}\right)\psi(q)^{1-n}q^{2n-1}=\infty.
\end {cases}$$
\end{thm}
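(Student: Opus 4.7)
\emph{Convergence.} The plan is a standard Hausdorff--Cantelli covering argument. It suffices to show $\mcal H^g(\W\cap B)=0$ for an arbitrary bounded ball $B\subset\mb R^n$. For each $(a_0,\mbf a)\in\mb Z\times(\mb Z^n\setminus\{\mbf 0\})$ the solution set of $|a_0+\mbf a\cdot\mbf x|<\psi(|\mbf a|)$ intersected with $B$ is a slab of thickness $\asymp\psi(|\mbf a|)/|\mbf a|$ around a hyperplane, which I would cover by $\asymp(|\mbf a|/\psi(|\mbf a|))^{n-1}$ balls of common radius $\rho_q:=\psi(q)/q$, where $q=|\mbf a|$. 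The monotonicity assumptions on $g$ guarantee that this is the $g$-efficient scale. There are $\asymp q^{n-1}$ vectors $\mbf a$ with $|\mbf a|=q$ and, for each, $\asymp q$ integers $a_0$ whose slab meets $B$, so the total $g$-mass of the cover at level $q$ is $\asymp q^{2n-1}\psi(q)^{1-n}g(\psi(q)/q)$. The assumed convergence of the stated series then yields $\mcal H^g(\W\cap B)=0$ upon taking $Q\to\infty$ in $\bigcup_{q\geq Q}S_q$.

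\emph{Divergence.} Here the plan is to invoke the \emph{ubiquity framework} of Beresnevich--Dickinson--Velani. First I would establish that the family of rational hyperplanes $\{a_0+\mbf a\cdot\mbf x=0\}$ is locally ubiquitous in $B$ with ubiquity function $\rho(Q)\asymp Q^{-(n+1)}$; this is essentially Dirichlet's theorem, secured by a standard pigeon-hole argument showing that a fixed positive proportion of any ball lies within $\rho(Q)$ of some resonant hyperplane with $|\mbf a|\leq Q$. Given this ubiquity, the general Jarn\'ik-type theorem for ubiquitous systems produces, whenever the series $\sum g(\psi(q)/q)\psi(q)^{1-n}q^{2n-1}$ diverges, a Cantor-like subset $K\subset\W\cap B$ and a mass distribution $\mu$ supported on $K$ satisfying a Frostman-type bound $\mu(A)\ll g(\diam A)$. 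The mass distribution principle then delivers $\mcal H^g(\W\cap B)>0$, and the usual $0$-$\infty$ dichotomy for lim sup sets (together with the fact that $\W$ is invariant under integer translations) promotes this to $\mcal H^g(\W)=\infty$.

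\emph{Main obstacle.} The essential difficulty lies in the divergence case, specifically the construction of the Cantor-like set $K$. Slabs about distinct rational hyperplanes are long, thin, and may overlap heavily, so one has to isolate pairwise disjoint sub-slabs centred on well-separated resonant hyperplanes of comparable height before any mass can be consistently distributed. The two monotonicity hypotheses on $g$ play a delicate role here: the decrease of $q^{-n}g(q)$ makes $\mcal H^g$ genuinely larger than $n$-dimensional Lebesgue measure (so that $\infty$ is a non-trivial conclusion), while the increase of $q^{1-n}g(q)$ is the key technical input that permits transferring volume-type estimates at the ubiquity scale $\rho(Q)=Q^{-(n+1)}$ down to $g$-mass estimates at the finer approximation scale $\psi(Q)/Q$, where the slabs defining $\W$ actually live.
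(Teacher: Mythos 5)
The paper does not prove Theorem~\ref{thm2}: it is quoted as a known result, with the convergence half going back to the covering argument of Dickinson--Velani and the divergence half established via the ubiquity framework in \cite[\S 12.1]{BDV_mtl}. Your plan --- a Hausdorff--Cantelli cover by $\asymp(q/\psi(q))^{n-1}$ balls of radius $\psi(q)/q$ per slab for convergence, and local ubiquity of rational hyperplanes at scale $Q^{-(n+1)}$ feeding the general Jarn\'ik-type ubiquity theorem for divergence --- is exactly the standard route taken in those references, and as an outline it is correct.
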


\noindent In fact, the original statements of Jarn\'ik \cite{Jar31} and Dickinson and Velani \cite{DV} required additional constraints on
 $\psi$ and $g$. The version stated above was established in \cite[\S 12.1]{BDV_mtl}. A sharper version of the
  above theorem can be obtained using the mass transference principle -- see \cite{BBDV, BVmass}.

There are various benefits of the characterisation of $W(\psi)$ using Hausdorff measure. For example, such a characterisation implies a
formula for the Hausdorff dimension of $W(\psi)$ (see \cite[\S5 and \S12.7]{BDV_mtl}) that measures the size of a null set.
 Another example lies in constructing points with the Diophantine properties `sandwiched' between close approximating
 functions \cite{BDV01}. The main purpose of this paper is to prove an analogue of Theorem~\ref{thm2} for the parabola
  $(x,x^2)$, which represents the first result of this kind in the theory of Diophantine approximation on manifolds.
  This theory will be briefly introduced in the next subsection. Before we proceed, it is worth mentioning that
  analogues of the above theorems have also been obtained in the case of simultaneous Diophantine approximation. In this system, instead of $W(\psi)$ one considers the set
\begin{equation}\label{vb2}
  \mcal S(\psi):=\left\{\mbf x\in\mb R^n:\begin{array}{l}
  \max\limits_{1\le i\le n}|qx_i-p_i|<\psi(q) \\[2ex]
  \text{for} \   i.m. \ (q, p_1,\dots,p_n)\in\mb N\times \mb Z^n
                           \end{array}
\right\}
\end{equation}
and, even more generally, analogous results are established for the  system of linear forms which unifies both (dual and simultaneous) type of approximations-- see \cite{BBDV} for a detailed account.

\subsection{Diophantine approximation on manifolds}

The problem of estimating the size of  $\psi$--approximable points becomes more intricate if one restricts $\mbf x\in\mb R^n$ to lie on an $m$--dimensional submanifold $\mcal M\subseteq \mb R^n$.
 This restriction means that the points $\mbf x$ of interest are functionally related, and hence fall into the case of approximation of dependent quantities. It is therefore natural to consider
  the induced measure on the manifold, since, when $m<n$, the $n$--dimensional Lebesgue measure of $\mcal M\cap \W$ is zero, irrespective of the approximating function $\psi$.
  Throughout, the induced Lebesgue measure on a given manifold will be denoted by $\lambda$.

Diophantine approximation on manifolds dates back to the profound conjecture of K. Mahler \cite{Mah32} which can be rephrased as the extremality of the Veronese curves
 $\mcal V_n:=\{(x,\cdots,x^n):x\in\mb R\}$. A manifold $\mcal M \subset \mb R^n$ is call {\em extremal}\/ if $\lambda(W(\tau)\cap \mcal M)=0$ for any  $\tau>n$.
 Note that extremality is weaker than the convergent cases of the above theorems.  Mahler's problem was solved completely by Sprind\v zuk \cite{Spr_Mah} in 1965 though the
 special cases $n=2$ and $n=3$ were settled earlier. Schmidt \cite{Sch_cur} extended Mahler's problem to the case of general planar curves, leading to a reasonably
 general theory of Diophantine approximation on manifolds. Sprind\v zuk conjectured that any analytic manifold satisfying a non-degeneracy condition is extremal.
  Non-degeneracy generalizes the non-zero curvature condition of planar curves and essentially means that the manifolds are `curved enough' to deviate from any
  hyperplane;  see \cite{Ber99,KM} for details. Although particular cases of Sprind\v zuk's conjecture were known, it was not until 1998 when Kleinbock and
   Margulis \cite{KM} established Sprind\v zuk's conjecture in full generality. The subsequent progress has been dramatic. In particular, the following analogue
   of Theorem \ref{thm1} for manifolds was proved in \cite{Ber99, BBKM02, BKM01}, while \cite{Ber99} implied an alternative proof of Sprind\v zuk's conjecture.

\begin{thm}[Beresnevich-Bernik-Kleinbock-Margulis]\label{thm3}
Let $\psi$ be an approximating function and let $\mcal M$ be a non-degenerate manifold. Then
$$\lambda(\W\cap\mcal M)=\begin {cases}
 0 \ & {\rm if } \quad \sum\limits_{q=1}^{\infty}\psi(q)q^{n-1}<\infty, \\[2ex]
 \lambda(\mcal M) \ & {\rm if } \quad \sum\limits_{q=1}^{\infty}\psi(q)q^{n-1}=\infty.
\end {cases}$$
\end{thm}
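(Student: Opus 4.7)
The plan is to split the proof into its convergence and divergence halves, both of which ultimately rest on understanding how dual rational hyperplanes $a_0 + a_1 x_1 + \cdots + a_n x_n = 0$ approximate the non-degenerate manifold $\mcal M$. The genuinely new ingredient compared with Theorem~\ref{thm1} is non-degeneracy, which is the quantitative manifestation of the fact that $\mcal M$ deviates from every affine hyperplane.

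For the convergence case I would apply the Borel--Cantelli lemma to the family of resonant sets
$$R_{a_0, \mbf a} := \{\mbf x \in \mcal M : |a_0 + a_1 x_1 + \cdots + a_n x_n| < \psi(|\mbf a|)\}.$$
The essential step is a uniform bound of the form $\lambda(R_{a_0,\mbf a}) \ll \psi(|\mbf a|)/|\mbf a|$. On a fixed coordinate chart the restricted linear form $a_0+\mbf a\cdot\mbf x$ cannot be nearly constant on a set of large induced measure, precisely because of non-degeneracy; the sharp quantitative version is the Kleinbock--Margulis non-divergence estimate on the space of lattices, which, after a standard transference argument, yields exactly the required measure bound. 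Summing the contributions over $a_0$ and over $\mbf a$ with $|\mbf a|\asymp q$ produces a tail controlled by $\sum_q q^{n-1}\psi(q)$, so convergence of this series forces $\lambda(\W\cap\mcal M)=0$.

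For the divergence case I would invoke the framework of \emph{ubiquitous systems}. Concretely, I would establish that the family of ``rational'' curves $\{\mbf x\in\mcal M:a_0+\mbf a\cdot\mbf x=0\}$ is locally ubiquitous on $\mcal M$ with respect to the function $\rho(Q)=c\,Q^{-n}$ for a suitable constant $c$, in the sense that for almost every $\mbf x\in\mcal M$ there exist arbitrarily large $Q$ and some such hyperplane with $|\mbf a|\le Q$ lying within distance $\rho(Q)$ of $\mbf x$. The general ubiquity machinery then automatically upgrades this to $\lambda(\W\cap\mcal M)=\lambda(\mcal M)$ for every monotonic $\psi$ with $\sum_q q^{n-1}\psi(q)=\infty$, completing the divergence half.

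The hard part, and the real content of the theorem, is establishing the ubiquity statement itself. One viable route, pioneered by Beresnevich \cite{Ber99}, is to prove a uniform Dirichlet-type theorem on $\mcal M$ by explicit construction of a regular system of rational hyperplanes close to the manifold; another, in the spirit of \cite{BKM01,BBKM02}, is to extract the required counting directly from quantitative non-divergence of unipotent flows on $\operatorname{SL}_{n+1}(\mb R)/\operatorname{SL}_{n+1}(\mb Z)$ combined with effective equidistribution of expanding translates of the lift of $\mcal M$. In either approach the delicate configurations to control are those integer vectors $\mbf a$ which are nearly tangent to $\mcal M$, since the resonant set $R_{a_0,\mbf a}$ then fattens anomalously; handling these quantitatively is exactly what the non-degeneracy hypothesis is designed to make possible.
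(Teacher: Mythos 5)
The paper does not prove Theorem~\ref{thm3}: it is quoted as a known result, with the convergence case attributed to \cite{Ber99} and \cite{BKM01} and the divergence case to \cite{BBKM02} (or to the ubiquity framework of \cite{BDV_mtl}), so there is no in-paper argument to compare yours against. Your overall architecture --- Borel--Cantelli for convergence, local ubiquity of rational hyperplanes for divergence, with the substance lying in the counting near $\mcal M$ --- is consistent with the strategy the paper attributes to those references.

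There is, however, a genuine gap in your convergence half. The ``uniform bound $\lambda(R_{a_0,\mbf a})\ll\psi(|\mbf a|)/|\mbf a|$'' that you propose to feed into Borel--Cantelli is false: when the hyperplane $a_0+\mbf a\cdot\mbf x=0$ is nearly tangent to $\mcal M$, the induced measure of $R_{a_0,\mbf a}$ is much larger than $\psi(|\mbf a|)/|\mbf a|$. The model case is visible in this very paper: for the parabola and a form $F(x)=a_2x^2+a_1x+a_0$ with a repeated (or nearly repeated) root, the set $\{x:|F(x)|<\delta\}$ has length $\asymp\sqrt{\delta/a_2}$ rather than $\delta/|\mbf a|$, which is exactly why the proof of Theorem~\ref{thm5} must isolate the repeated-root polynomials (Case I) and why Lemma~\ref{lem1} bounds the measure of the \emph{union} over $a_0$ instead of each resonant set individually. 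You in fact concede the point in your final paragraph (the resonant sets ``fatten anomalously'' near tangency), which contradicts the uniform bound claimed earlier; moreover you attach that caveat to the divergence/ubiquity discussion, whereas it is precisely the obstruction in the convergence case --- this is what the paper means by ``very delicate covering and counting arguments to reduce it to a situation where the Borel--Cantelli lemma is applicable.'' Non-degeneracy via $(C,\alpha)$-good functions only gives $\lambda(R_{a_0,\mbf a})\ll(\psi(|\mbf a|)/|\mbf a|)^{\alpha}$ with $\alpha<1$, which is not summable against the stated series; the actual proofs in \cite{Ber99,BKM01} instead estimate the measure of the union of all resonant sets at a given height, splitting according to whether the gradient of the restricted form is large or small and invoking quantitative non-divergence only in the small-gradient (near-tangent) regime. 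As written, your argument handles only the transversal hyperplanes and leaves the hard part unaddressed.
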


Unlike Theorem~\ref{thm1}, the convergent case of Theorem~\ref{thm3}, which was independently proved in \cite{Ber99} and \cite{BKM01}, is highly non-trivial and required very delicate covering and counting arguments
 to reduce it to a situation where the Borel-Cantelli lemma is applicable.
The divergent case was first established for the Veronese curves in \cite{B} and later for arbitrary non-degenerate manifolds in \cite{BBKM02}.
 It can also be proved through the general framework produced by Beresnevich, Dickinson and Velani \cite{BDV_mtl}.

The analogue of Theorem \ref{thm2} for manifolds is more involved than Theorem \ref{thm3}. The first major result in this direction was for the
Veronese curves. In 1970 Baker $\&$ Schmidt {\cite{bs}} obtained a lower
bound for the Hausdorff dimension of sets arising from Mahler's problem. They also conjectured that their bound was sharp. The Baker-Schmidt
 conjecture was settled by Bernik \cite{Bern83}. The generalised  Baker-Schmidt problem corresponds to determining the Hausdorff measure/dimension of $W(\psi)$ restricted to a manifold.

The Jarn\'ik type theorem for non-degenerate manifolds in the case of divergence was proved by Beresnevich, Dickinson
 and Velani \cite[Theorem 18]{BDV_mtl} as a consequence of their general ubiquity framework.

\begin{thm}[Beresnevich-Dickinson-Velani]\label{thm4}
Let $\mcal M$ be a non-degenerate submanifold of $\mb R^n$ of dimension $m$. Let $\psi$ be an approximating function
and let $g$ be a dimension function such that $q^{-m}g(q)\to\infty$ as $q\to 0$ and $q^{-m}g(q)$ is decreasing.
Suppose further that $q^{1-m}g(q)$ is increasing, then
$$\mcal H^g(\W\cap\mcal M)=
  \infty \ \  {\rm if } \quad \sum\limits_{q=1}^{\infty}g\left(\dfrac{\psi(q)}{q}\right)\psi(q)^{1-m}q^{m+n-1}=\infty. $$
\end{thm}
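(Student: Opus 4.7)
My plan is to deduce Theorem \ref{thm4} from the general ubiquity framework developed by Beresnevich, Dickinson and Velani, once the geometric input for non-degenerate manifolds has been verified. The starting point is to rewrite $\w:=W(\psi)\cap\mcal M$ as a $\limsup$ set built from neighborhoods of \emph{resonant sets} on $\mcal M$. For each $(a_0,\mbf a)\in\mb Z\times\mb Z^n\setminus\{\mbf 0\}$ let
\[
R_{(a_0,\mbf a)}:=\{\mbf x\in\mb R^n:a_0+a_1x_1+\cdots+a_nx_n=0\}\cap\mcal M.
\]
Each $R_{(a_0,\mbf a)}$ is generically an $(m-1)$-dimensional piece of $\mcal M$, and a point of $\mcal M$ lies in $\W$ iff it lies within induced distance $\asymp\psi(|\mbf a|)/|\mbf a|$ of infinitely many such $R_{(a_0,\mbf a)}$. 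Thus $\w$ is the $\limsup$ of $\psi(|\mbf a|)/|\mbf a|$-neighborhoods (in $\mcal M$) of the family $\{R_{(a_0,\mbf a)}\}$, fitting the abstract setting of \cite{BDV_mtl}.

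Next I would establish the key geometric input: a \emph{ubiquitous system} property with respect to the natural rate. Let $u_t:=2^t$ and set $\rho(t):=c\,u_t^{-n/m}$ for a small constant $c>0$. The required statement is that for every sufficiently small ball $B\subset\mcal M$,
\[
\lambda\Big(B\cap\!\!\!\bigcup_{\substack{(a_0,\mbf a)\in\mb Z\times\mb Z^n\setminus\{\mbf 0\}\\ u_{t-1}<|\mbf a|\le u_t}}\!\!\!\Delta\bigl(R_{(a_0,\mbf a)},\rho(t)\bigr)\Big)\ \ge\ \tfrac12\,\lambda(B)
\]
for all large $t$, where $\Delta(R,r)$ denotes the $r$-neighborhood of $R$ in $\mcal M$. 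This is precisely the content of the local ubiquity results of Beresnevich-Bernik-Kleinbock-Margulis, which are obtained via quantitative non-divergence on the space of lattices (Kleinbock-Margulis) combined with a Dirichlet-type covering argument exploiting the non-degeneracy of $\mcal M$. Non-degeneracy is essential here: it guarantees that for any $\mbf x_0\in\mcal M$ one can find rational hyperplanes passing $\rho(t)$-close to $\mbf x_0$ with $|\mbf a|\asymp u_t$, with sufficient multiplicity to cover half of $B$.

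With this ubiquity in hand, Theorem 12.7 of \cite{BDV_mtl} (the Hausdorff measure version of the ubiquity divergence theorem for resonant sets of codimension one in an $m$-dimensional space) directly gives the divergence statement. Specifically, under the monotonicity assumptions on $g$ (that $q^{-m}g(q)$ is decreasing and $q^{1-m}g(q)$ is increasing), the framework yields $\mcal H^g(\w)=\infty$ whenever
\[
\sum_{t=1}^{\infty}\frac{g\bigl(\psi(u_t)/u_t\bigr)}{\rho(t)^{m-1}}\ =\ \infty.
\]
Substituting $\rho(t)=c\,u_t^{-n/m}$ and applying Cauchy condensation to pass from the dyadic series in $t$ to the series over $q$, one recovers precisely the divergence of $\sum_q g(\psi(q)/q)\psi(q)^{1-m}q^{m+n-1}$ after absorbing the $\psi(q)^{1-m}$ factor, which arises from the volume of tubular neighborhoods of the $(m-1)$-dimensional resonant sets.

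The main obstacle is verifying the ubiquity hypothesis for a general non-degenerate $\mcal M$: this is the deep step, relying on the quantitative non-divergence machinery that underpins Theorem \ref{thm3}. Once ubiquity is established with the sharp rate $\rho(t)\asymp u_t^{-n/m}$, the Hausdorff-measure conclusion is a mechanical application of \cite[Theorem 12.7]{BDV_mtl}, and the monotonicity conditions on $g$ are exactly those required there. No condition on $\psi$ beyond being a decreasing approximating function is needed on the divergence side, since the ubiquity framework already handles the Borel-Cantelli obstacles intrinsic to Hausdorff measures.
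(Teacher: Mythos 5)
This theorem is not actually proved in the paper: it is quoted from the literature, the text noting only that it ``was proved by Beresnevich, Dickinson and Velani \cite[Theorem 18]{BDV_mtl} as a consequence of their general ubiquity framework''. Your outline correctly identifies that intended route --- recast $\W\cap\mcal M$ as a $\limsup$ of $\psi(|\mbf a|)/|\mbf a|$-neighbourhoods of the $(m-1)$-dimensional resonant sets $R_{(a_0,\mbf a)}$, verify local ubiquity via the quantitative non-divergence and counting machinery underlying Theorem~\ref{thm3}, and then invoke the Hausdorff-measure version of the ubiquity divergence theorem. At the level of architecture you are reproducing the proof the paper points to.

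However, the quantitative core of your sketch is wrong. For the \emph{dual} problem the Dirichlet-type rate is governed by the $\asymp u_t^{\,n+1}$ hyperplanes of height at most $u_t$, so local ubiquity holds with $\rho(t)\asymp u_t^{-(n+1)}$ (possibly up to a logarithmic factor), not with $\rho(t)\asymp u_t^{-n/m}$; the exponent $n/m$ belongs to the simultaneous setting. Correspondingly, the divergence sum delivered by the ubiquity framework for codimension-one resonant sets in an $m$-dimensional ambient space has the shape $\sum_t\rho(t)^{-1}\,\Psi(u_t)^{-(m-1)}\,g\bigl(\Psi(u_t)\bigr)$ with $\Psi(u_t)=\psi(u_t)/u_t$, the factor $\Psi^{-(m-1)}$ coming from covering each tubular neighbourhood by balls of radius $\Psi$ and the factor $\rho^{-1}$ from the count of resonant sets per unit ball; this is not your $\sum_t g\bigl(\Psi(u_t)\bigr)/\rho(t)^{m-1}$, in which the exponent $m-1$ has migrated onto the wrong quantity. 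With the correct $\rho$ the sum reduces, via Cauchy condensation, exactly to $\sum_q g\bigl(\psi(q)/q\bigr)\psi(q)^{1-m}q^{m+n-1}$; with your choices the exponents do not match, and the assertion that the $\psi(q)^{1-m}$ discrepancy is ``absorbed'' is not a computation. The plan is the right one, but as written the key quantitative step does not close.
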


As a consequence of Theorem \ref{thm4} one can obtain the following general lower bound for the Hausdorff
dimension of $W(\tau)\cap\mcal M$ which was initially proved by Dickinson and Dodson \cite{DD00}:
\begin{equation}\label{vb1}
  \dim( W(\tau)\cap\mcal M)\geq \dim\mcal M-1+\frac{n+1}{\tau+1} \ \ \text{for} \ \ \tau>n.
\end{equation}
It is commonly believed that \eqref{vb1} is sharp. In the most intricate and principal case of non-degenerate
curves the complementary upper bound to \eqref{vb1} is known when $n=2$ \cite{Badz, Bak78}
 and for a very limited range of $\tau$ for curves in higher dimensions \cite{BBD02}. As already mentioned,
 in the special case of the Veronese curves, Bernik \cite{Bern83} proved that we have equality in \eqref{vb1}.
 Regarding the more subtle theory for generalised Hausdorff measures $\mcal H^g$, nothing is known even for the parabola $\mcal V_2=\{(x,x^2):x\in\mb R\}$.
It is the case of the parabola $\mcal V_2$ that we concentrate on in this paper.

We end this subsection with some remarks on simultaneous Diophantine approximation regarding the set $\mcal S(\psi)$ introduced
in \eqref{vb2}. Analogues of the theorems of Khintchine and Jarn\'ik were obtained for non-degenerate planar curves \cite{BDV_main, BZ, VV}
 for both convergence and divergence. In the case of divergence, the analogues have also been recently obtained in higher dimensions \cite{Ber_Ann}
  for arbitrary analytic non-degenerate manifolds. All these results carry natural constraints on the approximating function $\psi$ which essentially meant that it
  cannot decay `too rapidly'. In the case of rapid decay of $\psi$ one has to specialize to particular types of manifolds. For example, the case of
   rapid decay was recently considered for polynomial curves in \cite{BDL}. In particular, the results of \cite{BDL} give a Jarn\' ik type theorem
   for fast decaying $\psi$ for the parabola. Consequently, the theory of simultaneous Diophantine approximation for $\mcal V_2$ is essentially complete.
    In this paper we obtain a complete theory for $\mcal V_2$ in the dual case.

\subsection{The results}

In this section we state the convergence counterpart of Theorem \ref{thm4} in the case of the parabola.
Firstly, note that the set $W(\psi)$ is invariant under translations by integers but when it is restricted to a manifold it is generally not invariant. However, it is convenient to restrict the points to the unit square.
More precisely, we will deal with the set
\begin{equation*}
  W(\mcal M_0, \psi)=\left\{x\in[0,1]: \begin{array}{l}
  |a_2x^2+a_1x+a_0|<\psi(|\mbf a|) \\
  \text{for} \  i.m. \ (a_0, a_1, a_2)\in\mb Z^3
                                  \end{array}\right\}
\end{equation*}
where $\mcal M_0$ stands for the arc $\{(x,x^2):x\in[0,1]\}$ of the parabola. The main result that we prove in this paper is the convergent half of the following combined statement.

\begin{thm}\label{thm5}
  Let $\psi$ be an approximating function and let $g$ be a dimension function such that $q^{-1}g(q)$ is monotonic. Assume that there exist positive constants $s_1$ and $s_2\leq 1$ such that $2s_1<3s_2$ and
\begin{equation}\label{zzz}
    x^{s_1}< g(x) < x^{s_2}\qquad\text{for all sufficiently small $x>0$}\,.
\end{equation}
Then
\begin{equation}\label{conresult}\mcal H^g( W(\mcal M_0, \psi))=
  0 \ \  {\rm if } \quad \sum\limits_{q=1}^{\infty}g\left(\frac{\psi(q)}{q}\right)q^2<\infty. \end{equation}

 \noindent Together with Theorems~\ref{thm3} and \ref{thm4} specialized to $\mcal M_0$ this implies that,
\begin{equation*}\label{thm6}
  \mcal H^g( W(\mcal M_0, \psi))=\left\{\begin{array}{cl}
 0 &  {\rm if } \quad \sum\limits_{q=1}^{\infty}g\left(\dfrac{\psi(q)}{q}\right)q^2<\infty,\\[3ex]
  \mcal H^g([0,1]) &  {\rm if } \quad \sum\limits_{q=1}^{\infty}g\left(\dfrac{\psi(q)}{q}\right)q^2=\infty.
                                     \end{array}\right.
\end{equation*}

\end{thm}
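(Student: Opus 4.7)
The plan is to apply the convergence Borel--Cantelli Lemma for Hausdorff measures, combined with a dyadic decomposition of the integer vectors $\mbf a$ and a Minkowski lattice-point count. Writing $W(\mcal M_0,\psi)=\limsup_{|\mbf a|\to\infty}\sigma(\mbf a)$, where $\sigma(\mbf a)=\{x\in[0,1]:|F_{\mbf a}(x)|<\psi(|\mbf a|)\}$ and $F_{\mbf a}(x)=a_2x^2+a_1x+a_0$, each $\sigma(\mbf a)\cap[0,1]$ is a union of at most two intervals $J$. Two standard length estimates hold on each monotone piece: the first-derivative bound $|J|\le 2\psi(|\mbf a|)/K(J)$, where $K(J)=\min_{x\in J}|F_{\mbf a}'(x)|$, and the second-derivative bound $|J|\ll\sqrt{\psi(|\mbf a|)/|a_2|}$ coming from $|F_{\mbf a}''|=2|a_2|$. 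Both force $|J|\to0$ as $|\mbf a|\to\infty$, so it suffices to prove $\sum_{\mbf a}\sum_{J\subseteq\sigma(\mbf a)\cap[0,1]}g(|J|)<\infty$.

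I would classify pairs $(\mbf a,J)$ by three dyadic scales: $H\asymp|\mbf a|$, $R\asymp|a_2|$ (with $a_2=0$ handled separately as a trivial linear case), and $K$ on the scale of $\min_{x\in J}|F_{\mbf a}'|$. The crucial counting input is the linear, unimodular change $(a_0,a_1,a_2)\mapsto(F_{\mbf a}(x_0),F_{\mbf a}'(x_0),a_2)$ at a fixed $x_0$. A Minkowski-type lattice-point count integrated over $x_0\in[0,1]$ yields
\[\sum_{\mbf a}\bigl|\,J\cap\{|F_{\mbf a}'|\in[K,2K]\}\,\bigr|\;\ll\;\psi(H)\,K\,R,\]
and coupling this with the length estimates and the monotonicity of $q^{-1}g(q)$ gives a per-dyadic-class bound $\ll K^{2}R\,g(\psi(H)/K)$ in the ``generic'' range $K\ge\sqrt{\psi(H)R}$, and $\ll\psi(H)R^{2}\,g(K/R)$ in the ``near-critical'' range $K<\sqrt{\psi(H)R}$.

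For the assembly, in the generic range, summing over dyadic $K\le H$ and $R\le H$ telescopes (via $g(\psi/K)\le(H/K)g(\psi/H)$, supplied by the monotonicity of $q^{-1}g(q)$) to $\ll H^{3}g(\psi(H)/H)$ per dyadic $H$, matching the hypothesised series $\sum_{q}q^{2}g(\psi(q)/q)$ term by term. The near-critical range is only populated when $R\asymp H$: at the vertex $x^{*}=-a_1/(2a_2)$ the condition $|F_{\mbf a}(x^{*})|<\psi$ gives $a_0=a_1^{2}/(4a_2)+O(\psi)$, and combined with $|a_1|\le 2|a_2|$ (needed for $x^{*}\in[0,1]$) this forces $|a_0|\lesssim|a_2|$, hence $|\mbf a|\asymp|a_2|$. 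Summing the near-critical bound over dyadic $K$ up to $\sqrt{\psi(H)H}$ yields a per-$H$ contribution $\ll H^{2-s_2/2}\psi(H)^{1+s_2/2}$; the two-sided bounds $x^{s_1}<g(x)<x^{s_2}$ together with $s_2\le 1$ and $2s_1<3s_2$ ensure $1+s_2/2-s_1>0$, and an elementary rearrangement shows this term is $\le H^{3-s_1}\psi(H)^{s_1}\le H^{3}g(\psi(H)/H)$, so it is absorbed into the already-controlled generic sum.

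The main obstacle is this near-critical regime: because the parabola has a vertex, a component $J$ can be as long as $\sqrt{\psi/|a_2|}$ rather than $\psi/|a_2|$, and it is only the combination of the lattice-point counting near the vertex with the quantitative constraints $s_2\le 1$ and $2s_1<3s_2$ on the dimension function $g$ that keeps the sum finite.
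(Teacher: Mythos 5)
Your architecture (dyadic scales in $|\mbf a|$, $|a_2|$ and the derivative, a mean-value counting lemma, and a separate near-vertex regime that consumes the hypotheses $s_2\le 1$ and $2s_1<3s_2$) is close in spirit to the paper's, which instead splits the cover according to whether $F$ has repeated or distinct roots. But your central counting estimate $\sum_{\mbf a}\bigl|J\cap\{|F_{\mbf a}'|\in[K,2K]\}\bigr|\ll\psi(H)KR$ is false precisely in the near-critical range where you need it. Consider the degenerate polynomials $F=(ux-v)^2$ with $u\asymp\sqrt R$ and $0\le v\le u$: there are $\asymp R$ of them, each with $\sigma(\mbf a)=\{|x-v/u|<\sqrt{\psi}/u\}$ of length $\asymp\sqrt{\psi/R}$ and with $|F'|\le 2u\sqrt{\psi}\asymp\sqrt{\psi R}$ throughout, so the left-hand side summed over dyadic $K\le\sqrt{\psi R}$ is $\asymp\sqrt{\psi R}$, whereas your right-hand side sums to $\asymp(\psi R)^{3/2}$, smaller by the factor $\psi R\le H^{-1}$ (by \eqref{vb6}). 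The volume heuristic behind a ``Minkowski-type count'' fails because these lattice points concentrate on the discriminant locus $a_1^2=4a_0a_2$. Relatedly, even in the generic range the factor $\psi$ cannot come from a pointwise-in-$x_0$ count: an interval of length $2\psi<1$ contains up to one integer $a_0$, never ``$\ll\psi$'' of them, so one needs the root-spacing/telescoping argument that is the content of the paper's Lemma~\ref{lem1}.

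The upshot is that your near-critical contribution is underestimated: the true per-$H$ contribution of the degenerate polynomials is $\asymp H\,g\bigl(\sqrt{\psi(H)/H}\bigr)$, not $\psi(H) H^2 g\bigl(\sqrt{\psi(H)/H}\bigr)$. This larger quantity still converges under \eqref{zzz} --- that is exactly the paper's Case I, which uses $2^{3n}(\psi(2^n)/2^n)^{s_1}<1$ together with $3s_2/(2s_1)>1$ --- so the theorem survives, but your proof needs repair: split off the zero-discriminant polynomials $k(ux-v)^2$ and count them directly ($u\asymp 2^{n/2}$, $v\ll u$, hence $\asymp 2^n$ intervals of length $\asymp\sqrt{\psi(2^n)}/2^{n/2}$), and use integrality of the discriminant ($|D(F)|\ge 1$ when $D(F)\ne 0$, hence $|F'(\alpha)|\ge 1$ at each real root, and no complex-root polynomial meets $\{|F|<\psi\}$ for large $n$) to show that non-degenerate polynomials never enter your near-critical range at all. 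With that split made, your generic-range bookkeeping collapses to the paper's argument: a total-measure bound of $\ll\psi(2^n)$ per pair $(a_2,a_1)$ plus the observation that each component contains a core interval of length $\gg\psi(2^n)/2^n$, giving $\ll 2^n$ components per pair and $\ll 2^{3n}g(\psi(2^n)/2^n)$ per dyadic block.
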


Note that condition \eqref{zzz} is satisfied whenever the limit $\log g(x)/\log x$ exists and is positive.
%
In the case of $g(x)=x$, Theorem~\ref{thm5} corresponds to the Lebesgue measure case (Theorem~\ref{thm3}). The condition `$q^{-1}g(q)$ is monotonic' is not a particularly restrictive condition and is the main ingredient in unifying both the  Lebesgue and Hausdorff measure statements, for details   see \cite{BVmass}. Basically, to prove the \eqref{conresult} we need $q^{-1}g(q)\to \infty$ as $q\to 0$.

 An immediate corollary of Theorem~\ref{thm5}, which was proved explicitly in \cite{Bak78}, is the following Hausdorff dimension result: for $\tau>2$
\begin{equation*}
\dim W(\mcal M_0, \tau) = \frac{3}{\tau+1}\ .
\end{equation*}
Moreover, we have that $\mcal H^s(W(\mcal M_0, \tau))=\infty$ for $s=3/(\tau+1)$.
To give a more subtle example, let
$$
\log_iq=\underbrace{\log\dots\log}_{i\text{ times}} q
$$
and for some $\varepsilon\in\mb R$, $\tau>2$ and $\alpha_1,\dots,\alpha_t\in\mb R$, let
$$
\psi_\varepsilon(q)=q^{-\tau}\prod_{i=1}^t\left(\log_iq\right)^{-\alpha_i}\times\left(\log_tq\right)^{\varepsilon}.
$$
Then we have the following exact logarithmic order statement for approximation on the parabola.

\begin{cor}
  For any $\varepsilon>0$ there is a dimension function $g_\varepsilon$ such that
  $$
  \mcal H^{g_\varepsilon}(W(\mcal M_0, \psi_0))=\infty
  \qquad\text{while}\qquad
  \mcal H^{g_\varepsilon}(W(\mcal M_0, \psi_\varepsilon))=0.
  $$
  Consequently,  the set $W(\mcal M_0, \psi_0)\setminus W(\mcal M_0, \psi_\varepsilon)$ is not empty and indeed uncountable.
\end{cor}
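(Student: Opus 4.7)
The plan is to invoke Theorem~\ref{thm5} twice, with a single dimension function $g_\varepsilon$ engineered so that its Khintchine-type series sits precisely on the critical logarithmic boundary for $\psi_0$ and is pushed across that boundary by the factor $(\log_t q)^{\varepsilon}$ that separates $\psi_\varepsilon$ from $\psi_0$.

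Let $s := 3/(\tau+1)$ be the critical Hausdorff exponent for $W(\mcal M_0,\tau)$. I look for $g_\varepsilon$ of the form $g_\varepsilon(x) = x^s L(x)$, with $L$ a slowly varying factor built from iterated logarithms. Since
$$
\frac{\psi_0(q)}{q} = q^{-\tau-1}\prod_{i=1}^t(\log_i q)^{-\alpha_i}
\quad\text{and}\quad
\frac{\psi_\varepsilon(q)}{q} = \frac{\psi_0(q)}{q}\cdot(\log_t q)^{\varepsilon},
$$
and since $\log_i(q/\psi_0(q)) \sim (\tau+1)\log_i q$ to leading order for each $i$, I take
$$
L(x) = \prod_{i=1}^{t-1}\bigl(\log_i(1/x)\bigr)^{-s\alpha_i}\cdot\bigl(\log_t(1/x)\bigr)^{1-s\alpha_t},
$$
with the exponent on the last factor tuned so that
$$
g_\varepsilon\!\left(\frac{\psi_0(q)}{q}\right)q^2 \;\asymp\; \frac{1}{q\log q\,\log_2 q\,\cdots\,\log_t q},
$$
the classical marginally divergent iterated-logarithm series. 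The same expression with $\psi_\varepsilon$ in place of $\psi_0$ acquires an additional factor of $(\log_t q)^{s\varepsilon}$ at leading order, the perturbation in the lower iterated logs being absorbed by slow variation; combined with the already-critical tail this tips the series across the boundary to convergence.

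It remains to verify the hypotheses of Theorem~\ref{thm5}: the ratio $g_\varepsilon(x)/x = x^{s-1}L(x)$ is monotonic for small $x$ since $s<1$ and $L$ is slowly varying, and the two-sided bound $x^{s_1}<g_\varepsilon(x)<x^{s_2}$ with $2s_1<3s_2$ is obtained by taking $s_1,s_2$ as arbitrarily small perturbations of $s$, using that iterated logarithms are dominated by every positive power of $x$. Theorem~\ref{thm5} then yields $\mcal H^{g_\varepsilon}(W(\mcal M_0,\psi_0)) = \infty$ and $\mcal H^{g_\varepsilon}(W(\mcal M_0,\psi_\varepsilon)) = 0$. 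The uncountability assertion is immediate, since a countable set is $\mcal H^g$-null for any dimension function vanishing at the origin, so removing an $\mcal H^{g_\varepsilon}$-null set from one of infinite $\mcal H^{g_\varepsilon}$-measure leaves infinite (in particular positive) Hausdorff measure. The main obstacle I expect is the delicate calibration of the slow-variation exponents in $L$, so that the $\psi_0$-series lands exactly on the divergent side of the logarithmic boundary while the $(\log_t q)^{\varepsilon}$ perturbation has the correct sign and sufficient magnitude to tip the $\psi_\varepsilon$-series to convergence; once these choices are fixed, the remaining estimates are standard slow-variation bookkeeping.
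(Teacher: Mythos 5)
Your overall strategy is exactly the one the paper intends: its ``proof'' of the corollary is a single sentence deferring to the exact-logarithmic-order argument of \cite{BDV01}, which is precisely the scheme you describe --- calibrate a dimension function $g_\varepsilon(x)=x^sL(x)$, with $s=3/(\tau+1)$ and $L$ an iterated-logarithm factor, so that the zero--infinity law of Theorem~\ref{thm5} lands on opposite sides for $\psi_0$ and $\psi_\varepsilon$. However, two of your concrete steps fail as written. First, the direction of the $\varepsilon$-perturbation is wrong: with $\psi_\varepsilon=\psi_0\cdot(\log_t q)^{\varepsilon}$ and $\varepsilon>0$, the $\psi_\varepsilon$-series acquires the factor $(\log_t q)^{s\varepsilon}\to\infty$, which makes an already divergent positive series \emph{more} divergent; it cannot ``tip the series across the boundary to convergence'' as you assert. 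Indeed, as literally stated $\psi_\varepsilon\ge\psi_0$ eventually, so $W(\mcal M_0,\psi_0)\subseteq W(\mcal M_0,\psi_\varepsilon)$ and the asserted conclusion would contradict monotonicity of $\mcal H^{g}$; the intended perturbation (as in \cite{BDV01}) is $(\log_t q)^{-\varepsilon}$, and your argument must be run with that sign --- a discrepancy you should have flagged rather than argued around. Second, your $L$ is miscalibrated: with $L(x)=\prod_{i=1}^{t-1}(\log_i(1/x))^{-s\alpha_i}(\log_t(1/x))^{1-s\alpha_t}$ one gets
$$
g_\varepsilon\left(\frac{\psi_0(q)}{q}\right)q^2\asymp \frac{1}{q}\prod_{i=1}^{t-1}(\log_i q)^{-2s\alpha_i}\,(\log_t q)^{1-2s\alpha_t},
$$
which is the critical series $\sum 1/(q\log q\cdots\log_t q)$ only for special values of the $\alpha_i$; for generic $\alpha_i$ the convergence of this sum is decided by the first logarithmic factor and no perturbation of the $\log_t$ exponent can change it. The correct choice is $L(x)=\prod_{i=1}^{t}(\log_i(1/x))^{s\alpha_i-1}$, which cancels the $(\log_i q)^{-s\alpha_i}$ coming from $x^s$ and leaves exactly the marginally divergent series for $\psi_0$, while the (sign-corrected) perturbation contributes $(\log_t q)^{-s\varepsilon}$ and yields convergence for $\psi_\varepsilon$. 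Your verification of the hypotheses of Theorem~\ref{thm5} (monotonicity of $x^{-1}g_\varepsilon(x)$, and condition \eqref{zzz} with $s_2<s<s_1$ both close to $s$, so that $2s_1<3s_2$ and $s_2\le 1$) and your deduction of uncountability from $\mcal H^{g_\varepsilon}(W(\mcal M_0,\psi_0)\setminus W(\mcal M_0,\psi_\varepsilon))=\infty$ are sound.
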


The proof of this corollary follows the same arguments as the main result of \cite{BDV01} and the details are left to the reader.

\section{Proof of Theorem~\ref{thm5}}

\subsection{Hausdorff measure and dimension}\label{hm}

We begin with a brief introduction to Hausdorff measures and dimension. For further details see \cite{berdod}.
Let
$F\subset \mb{R}^n$.
 For any $\rho>0$ a countable collection $\{B_i\}$ of balls in
$\mb{R}^n$ with diameters $\mathrm{diam} (B_i)\le \rho$ such that
$F\subset \bigcup_i B_i$ is called a $\rho$-cover of $F$. Let $g$ be a dimension function.
Define
\[
\mcal{H}_\rho^g(F)=\inf \sum_i g(\mathrm{diam}(B_i)),
\]
where the infimum is taken over all possible $\rho$-covers $\{B_i\}$ of $F$.
The Hausdorff $g$-measure of $F$ is defined as
\[
\mcal{H}^g(F)=\lim_{\rho\to 0}\mcal{H}_\rho^g(F).
\]
In the particular case when $g(r)=r^s$ with  $
s>0$, we write
$\mcal{H}^s$ for  $\mcal{H}^g$ and the measure
 is referred to as $s$-dimensional Hausdorff measure. The
Hausdorff dimension of  $F$ is denoted by $\dim F $ and is
defined  as
\[
\dim F :=\inf\{s\in \mb{R}^+\;:\; \mcal{H}^s(F)=0\}.
\]

\emph{Notation.} To simplify notation in the proofs below the Vinogradov symbols $\ll$ and $\gg$ will be used to indicate
an inequality with an unspecified positive multiplicative constant.
If $a\ll b$ and $a\gg b$  we write $a\asymp
b$, and say that the quantities $a$ and $b$ are comparable.

\subsection{Preliminaries}

The proof of the convergence case follows on constructing a cover of the set $\w$ with some bounded intervals, estimating the measure
of each of them and their number and thereby finding an estimate for the Hausdorff measure of the entire set. The proof presented below
 adapts the counting ideas of \cite{B96} and \cite{B} but requires a different treatment to achieve the goal.

We will assume that
\begin{equation}\label{coneq}
\sum\limits_{q=1}^{\infty}g\left(\frac{\psi(q)}{q}\right)q^2<\infty.
\end{equation}
Since both $g$ and $\psi$ are monotonic functions, by the Cauchy condensation argument, it is easy to see that for any positive integer $a>1$
$$
\sum\limits_{q=1}^{\infty}g\left(\frac{\psi(q)}{q}\right)q^2 \asymp \sum\limits_{n=1}^{\infty}g\left(\frac{\psi(a^n)}{a^n}\right)(a^n)^3.
$$
In particular, by \eqref{coneq}, we have that $g\left(\frac{\psi(a^n)}{a^n}\right)(a^n)^3\to0$, whence
\begin{equation}\label{vb6}
\psi(a^n)< a^{-2n}\qquad\text{for sufficiently large }n.
\end{equation}
Equation \eqref{vb6} is satisfied because of the assumptions $q^{-1}g(q)\to\infty$ as $q\to 0$  and $s_2\leq 1$ in the statement of the theorem. The set $\w$ can then be covered as follows
\begin{equation*}\label{limsup1}
  \w\subseteq \bigcap_{N=1}^{\infty}\bigcup_{n=N}^\infty{}\bigcup_{2^n\leq|\mbf a|<2^{n+1}}\Delta(n, F),
\end{equation*}
where
\begin{equation*}
  \Delta(n, F)=\{x\in\mb I: |F(x)|=|a_2x^2+a_1x+a_0|<\psi(2^n)\},\qquad \mb I=[0,1].
\end{equation*}
Here, we have used the fact that $\psi(|\mbf a|)<\psi(2^n)$ as $\psi$ is monotonically decreasing. Hereafter, it will be assumed that $2^n\leq|\mbf a|<2^{n+1}$ for some $n$. Then for each $N\in\mb N$,
\begin{equation}\label{vb3}
  \w\subseteq\bigcup_{n\geq N}^\infty{}\bigcup_{2^n\leq|\mbf a|<2^{n+1}}\Delta(n, F).
\end{equation}
Notice that for the case $a_2=0$, Theorem \ref{thm5} reduces to Theorem \ref{thm2}, that is the one dimensional Jarn\'ik theorem (i.e. set $n=1$ in Theorem \ref{thm2}), and
 there is nothing to prove. Also we can assume throughout that  $a_2>0$ as  $|\Delta(n, F)|=|\Delta(n, -F)|.$

Analyzing the cover of $\w$ arising from \eqref{vb3} will be split into two natural cases: when $F(x)$ has repeated roots and when
it has distinct roots. To be precise, \eqref{vb3} dissolves into
\begin{equation}\label{mh1}
  \w\subseteq\wrp\cup\wds,
\end{equation}
where \begin{equation}\label{rr}
\wrp=\left\{\bigcup_{n\geq N}^\infty{}\bigcup_{2^n\leq|\mbf a|<2^{n+1}}\Delta(n, F): F {\rm \  has \ repeated \  roots}\right\}
\end{equation}
and $$\wds=\left\{\bigcup_{n\geq N}^\infty{}\bigcup_{2^n\leq|\mbf a|<2^{n+1}}\Delta(n, F): F {\rm \  has \ distinct \  roots}\right\}.$$
Thus, \begin{equation*}
  \mcal H^g(\w)\leq  \mcal H^g(\wrp)+ \mcal H^g(\wds).
\end{equation*}
Hence, the desired statement that $\mcal H^g(\w)=0$ will follow by establishing separately,
\begin{center}
  \[ \mbf{Case \ I:}\qquad \qquad\qquad \mcal H^g(\wrp)=0\]
  \[\mbf{Case \ II:} \ \ \quad \qquad\qquad\mcal H^g(\wds)=0.\]
\end{center}

\subsection{Establishing Case I}

Let $F(x)$ have a multiple root, say, $v/u\in\mb Q$, where $u,v\in\mb Z$ are coprime. Then $\Delta(n, F)$ in \eqref{rr} can be written as
\begin{eqnarray}\label{rep2}
 \Delta(n, F)&=&\{x\in\mb I: |a_2x^2+a_1x+a_0|=|k(ux-v)^2|<\psi(2^n)\}\notag\\ &=&\left\{x\in\mb I: \left|x-\frac{v}{u}\right|^2< \psi(2^n)/ku^2\right\}\label{rep1},
\end{eqnarray}
where $  k, u, v\in\mb Z \ \text{and} \ k, u>0$.

For the ease of calculations we may assume that  $k=1$. We will notice from the following calculations and discussions  that the case when $k>1$ is almost the same as for $k=1$.
%

 Since $x\in\mb I$ and $\psi(2^n)<1$ for sufficiently large $n$, we have that $-1<v\leq 1+u$.
Now, by \eqref{rep2}, we have that $a_2x^2+a_1x+a_0=u^2x^2-2uvx+v^2$ and then, by $2^n\leq|\mbf a|<2^{n+1}$, we get the following bounds on $v$ and $u$:
\begin{equation*}
   -1<v\leq 1+u \ \ \ \text{and} \ \ \ 2^{(n-3)/2}<u<2^{(n+1)/2}.
\end{equation*}
Here we use the fact that $|\mbf a|=\max\{|a_2|,|a_1|\}=\max\{u^2,2u(1+u)\}$.

Hence, in view of \eqref{mh1}, \eqref{rr} and \eqref{rep2},  we have that
\begin{equation*}\label{fvol1}
  \wrp\subseteq \bigcup_{n\geq N}^\infty{}\ \bigcup_{2^{(n-3)/2}< u<2^{(n+1)/2}}\ \bigcup_{-1<v\leq 1+u}\left\{x\in\mb I:\left|x-\frac{v}{u}\right|< \sqrt{\psi(2^n)}/u\right\}.
\end{equation*}
Next we estimate the  $\mcal H^g$ measure of $\wrp$.  In doing that, we repeatedly use the properties of the approximating function $\psi$ and the dimension function $g$, that is, $\psi$ is decreasing and $g$ is increasing. 

\begin{align*}\label{fvol2}
\mcal H_N^g (\wrp)&\ll \sum_{n\geq N}\ \sum_{2^{(n-3)/2} <  u<2^{(n+1)/2}}ug\left(\sqrt{\psi(2^n)}/u\right)\\[1ex]
 & \leq \sum_{n\geq N}\ \sum_{2^{(n-3)/2}< u<2^{(n+1)/2}}2^{(n+1)/2}g\left(\sqrt{\psi(2^{n})}/2^{(n-3)/2}\right)\\[1ex]
 & \leq \sum_{n\geq N}\ 2^{n+1}g\left(\sqrt{2^3\psi(2^n)/2^n}\right).
\end{align*}
It is worth pointing out that some of the examples of dimension functions do not increase everywhere but only after certain point and this is all that is needed in the above calculations. By \eqref{coneq} and the Cauchy condensation test, we have that $2^{3n}g(\psi(2^n)/2^n)<1$ for sufficiently large $n$. By \eqref{zzz}, we have that
\begin{equation}\label{zzz2}
    2^{3n}(\psi(2^n)/2^n)^{s_1}< 2^{3n}g(\psi(2^n)/2^n)<1
\end{equation}
for large $n$. Hence, by \eqref{zzz} again, we get that
\begin{align*}
\mcal H_N^g (\wrp)&\ll
 \sum_{n\geq N}\ 2^{n}\left(\sqrt{2^3\psi(2^n)/2^n}\right)^{s_2}\\[1ex]
 & \stackrel{\eqref{zzz2}}{\ll} \sum_{n\geq N}\ 2^{n}2^{-3s_2n/2s_1}<\infty
\end{align*}
since $3s_2/2s_1>1$.
Then,  by the definition of the Hausdorff measure, we get
\begin{align*}
  0\le \mcal H^g(\wrp)&\le \lim_{N\to\infty}\mcal H_N^g (\wrp)  =0.
\end{align*}

\subsection{Establishing Case II}Since we deal with the polynomials of degree $2$, they have 2  roots and there are two possibilities: both roots are real or both
 roots are non-real. We first deal with the real roots case and later dispose of the complex roots case. Let us fix a polynomial $F(x)=a_2x^2+a_1x+a_0=a_2(x-\al_1)(x-\al_2)$, where $\al_1$ and $\al_2$ are distinct roots of $F(x)$.
  As before we will assume that
\begin{equation*}\label{vb5}
    a_2>0\qquad\text{and}\qquad 2^n\le \max\{a_2,|a_1|\}<2^{n+1}\qquad\text{for some }n\in\mb Z_{\ge0}.
\end{equation*}
We will only be interested in polynomials $F$ with $\Delta(n, F)\neq\emptyset$. This means that $|F(x)|<\psi(|\mbf a|)<1$ for some $x\in[0,1]$ and consequently $|a_0|<a_2+|a_1|+1$. Thus
\begin{equation*}\label{vb5+}
    |a_0|<2^{n+2}.
\end{equation*}
Since
\begin{equation*}
  1\leq |D(F)|=a_2^2|\al_1-\al_2|^2=|F^\prime(\al_1)|^2=|F^\prime(\al_2)|^2,
\end{equation*}
where $D(F)=a_1^2-4a_2a_0$ is the discriminant of $F$, we have that
\begin{equation}\label{root1}
  1\leq |F^\prime(\al_1)|=|F^\prime(\al_2)|\leq 10\cdot2^n.
\end{equation}

We assume that $\al_1$ is the left root of the polynomial and $\al_2$ is the right root. Next, we split the set $\Delta(n, F)$   into a union of two intervals $\Delta_1(n, F)$ and $\Delta_2(n, F)$ corresponding to the
roots $\al_1$ and $\al_2$ respectively. Precisely,\[\Delta_1(n, F):=\{x\in \Delta(n, F): |x-\al_1|<|x-\al_2|\}\]
\noindent and \[\Delta_2(n, F):=\{x\in \Delta(n, F): |x-\al_2|<|x-\al_1|\}.\]
In other words,  $\Delta_1(n, F)$ consists of all those points which are nearer to $\al_1$ than $\al_2$ and similarly $\Delta_2(n, F)$ consists of all those points which are nearer to $\al_2$ than $\al_1$.  If $x\in\Delta_1(n, F)$, then
\begin{equation*}
  |x-\al_2|\geq \frac{1}{2}\left(\frac{-a_1+\sqrt{D(F)}}{2a_2}-\frac{-a_1-\sqrt{D(F)}}{2a_2}\right)=\frac{\sqrt{D(F)}}{2a_2}=\frac{|F^\prime(\al_1)|}{2a_2},
\end{equation*}
where $D(F)=a_1^2-4a_0a_2$. Substituting this into the inequality $|a_2(x-\al_1)(x-\al_2)|<\psi(2^n)$, we get
\begin{equation}\label{dm}
  |x-\al_1|<\frac{2\psi(2^n)}{|F^\prime(\al_1)|}.
\end{equation}
Similarly, for any $x\in\Delta_2(n, F)$,  $|x-\al_2|<2\psi(2^n)/|F^\prime(\al_2)|$.

Notice that, if $\al_1$ and $\al_2$ are two distinct complex roots of the polynomial $F(x)$. Then,  $\al_1=\overline{\al}_2$, which implies that
\begin{equation*}
  2^{-n}\ll\frac{1}{|a_2|}\leq|\al_1-\al_2|\leq |\al_1-x|+|x-\al_2|\le \frac{2\psi(2^n)}{|F^\prime(\al_1)|}+\frac{2\psi(2^n)}{|F^\prime(\al_2)|}\ \underset{\text{by \eqref{vb6}}}\ll \ 2^{-2n}
\end{equation*}
which is impossible for $n$ large enough. Hence, for $n$ sufficiently large, the polynomials $F(x)$ of interest must have real roots.

\begin{lem}\label{lem1}
Let $\psi$ be an approximating function. Consider the set of polynomials $F$ for which  $a_2$ and $a_1$ are fixed and let
\begin{equation*}
  \Delta( a_2, a_1)=\bigcup_{|a_0|<2^{n+2}}\Delta_1(n,  F)\cup \Delta_2(n, F).
\end{equation*} Then,
 $ \lambda(\Delta( a_2, a_1))\leq 16 \psi(2^n)$, where $\lambda$ denotes the 1-dimensional Lebesgue measure.
\end{lem}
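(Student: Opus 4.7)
The plan is to decompose $\Delta(a_2,a_1)$ along the splitting $\Delta(n,F)=\Delta_1(n,F)\cup\Delta_2(n,F)$ already in hand, then bound each of the two resulting unions by a telescoping argument in the integer parameter $a_0$.

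First I record a disjointness observation: for fixed $a_1,a_2$ and distinct integers $a_0\neq a_0'$, the sets $\Delta(n,F_{a_0})$ and $\Delta(n,F_{a_0'})$ are disjoint, since any common point $x$ would force $|a_0-a_0'|\le|F_{a_0}(x)|+|F_{a_0'}(x)|<2\psi(2^n)<1$ for $n$ large, by \eqref{vb6}. Hence
\[
\lambda(\Delta(a_2,a_1)) = \sum_{a_0}\lambda(\Delta_1(n,F_{a_0})) + \sum_{a_0}\lambda(\Delta_2(n,F_{a_0})),
\]
and from \eqref{dm}, together with the identity $|F'(\al_i)|=a_2|\al_1-\al_2|=\sqrt{D(F)}$ (which is already used in the paragraph leading to \eqref{root1}), each $\Delta_i(n,F_{a_0})$ is contained in an open interval of length at most $4\psi(2^n)/\sqrt{D(a_0)}$, where $D(a_0)=a_1^2-4a_2a_0$.

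The key step is to parametrise the roots by $a_0$. Since the midpoint $s=-a_1/(2a_2)$ of the two roots does not depend on $a_0$, we can write $\al_1(a_0)=s-\sqrt{D(a_0)}/(2a_2)$ and $\al_2(a_0)=s+\sqrt{D(a_0)}/(2a_2)$. As $a_0$ increases by $1$, $D$ strictly decreases by $4a_2$, so $\al_1$ is strictly increasing and $\al_2$ strictly decreasing in $a_0$. A short calculation gives
\[
|\al_1(a_0+1)-\al_1(a_0)| \;=\; \frac{2}{\sqrt{D(a_0)}+\sqrt{D(a_0+1)}} \;\ge\; \frac{1}{\sqrt{D(a_0)}},
\]
and analogously for $\al_2$.

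Combining the two ingredients, the monotonicity of $\al_1$ in $a_0$ lets us telescope:
\[
\sum_{a_0}\lambda(\Delta_1(n,F_{a_0})) \;\le\; 4\psi(2^n)\sum_{a_0}\frac{1}{\sqrt{D(a_0)}} \;\le\; 4\psi(2^n)\sum_{a_0}\bigl|\al_1(a_0+1)-\al_1(a_0)\bigr|.
\]
Only those $a_0$ with $\al_1(a_0)\in[-2\psi(2^n),1+2\psi(2^n)]$ contribute (otherwise $\Delta_1(n,F_{a_0})\cap[0,1]=\emptyset$), so the telescoped sum is bounded by a small absolute constant; the same holds for $\al_2$, yielding the claimed estimate $16\psi(2^n)$. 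The main technical point is the telescoping inequality combined with the boundary bookkeeping that keeps the range of $\al_i$ under control, and checking that the single ``endpoint'' term in the telescope does not ruin the numerical constant; everything else is direct algebra.
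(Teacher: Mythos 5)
Your argument is correct and is essentially the paper's proof: both reduce to bounding $\sum_{a_0}1/|F'(\al_i(a_0))|$ by observing that consecutive roots (as $a_0$ varies by $1$) are spaced at least $1/|F'|$ apart, so the sum telescopes to the length of the interval containing the roots, with the single endpoint term controlled by the lower bound $|F'(\al_i)|\ge 1$ from \eqref{root1}. The only cosmetic difference is that you derive the spacing estimate from the explicit root/discriminant formula, whereas the paper applies the mean value identity to $P(x)=a_2x^2+a_1x$.
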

\begin{proof}[Proof of lemma \ref{lem1}] By the triangle inequality for measures,
\begin{equation*}
    \lambda( \Delta( a_2, a_1))\leq \sum_{|a_0|<2^{n+2}}\lambda(\Delta_1(n,  F))+\sum_{|a_0|<2^{n+2}}\lambda(\Delta_2(n,  F)).
\end{equation*}
For fixed $a_2$ and $a_1$, let $P(x)=a_2x^2+a_1x$ then $F(x)=P(x)+a_0$. Now since $F(x)$ has two real roots for each $a_0$, running over all $|a_0|<2^{n+2}$ we get a set of roots. Let $\al^{(0)}<\al^{(1)}<\cdots<\al^{(k)}$ be the
complete set of left roots of the polynomial $F(x)$ within the interval $\mb I$. Without loss of generality label the constant coefficients
 so that $a_0^{(i+1)}=a_0^{(i)}+1$ and assume that $\al^{(i)}$  is a left root for $F(x)=P(x)+a_0^{(i)}$ and $\al^{(i+1)}$ is a left root for  $F(x)=P(x)+a_0^{(i+1)}$ such that $\al^{(i)}<\al^{(i+1)}$. Then the following is readily obtained

\begin{align*}
  1=a_0^{(i+1)}-a_0^{(i)}&=P(\al^{(i)})-P(\al^{(i+1)})=\left(a_2\left(\al^{(i)}+\al^{(i+1)}\right)+a_1\right)\left(\al^{(i)}-\al^{(i+1)}\right)\\&=
  \left|P^\prime\left(\frac{\al^{(i)}+\al^{(i+1)}}{2}\right)\right|\left(\al^{(i+1)}-\al^{(i)}\right)\leq |P^\prime(\al^{(i+1)})|\left(\al^{(i+1)}-\al^{(i)}\right).
\end{align*}
Hence, when $a_0$ runs over all the values within the prescribed range, we have
\begin{equation}\label{root2}
  \sum_{i=0}^{k-1}\frac{1}{|P^\prime(\al^{(i+1)})|}\leq \sum_{i=0}^{k-1}\left(\al^{(i+1)}-\al^{(i)}\right)=\left(\al^{(k)}-\al^{(0)}\right)\leq \lambda (\mb I)= 1.
\end{equation}
Notice that $F^\prime(x)=P^\prime(x)$. Thus, using \eqref{root1}, \eqref{dm} and \eqref{root2} we have

\begin{align*}
\sum_{|a_0|<2^{n+2}}\lambda(\Delta_1(n, F))&\leq \sum_{i=-1}^{k-1}\frac{4\psi(2^n)}{|F^\prime (\al^{(i+1)})|}
=\sum_{i=-1}^{k-1}\frac{4\psi(2^n)}{|P^\prime (\al^{(i+1)})|}\\ &\leq 4\psi(2^n)\left(1+\frac{1}{|P^\prime (\al^{0})|}\right)\leq 8 \psi(2^n).
\end{align*}
A similar estimate would yield $\sum\limits_{|a_0|<2^{n+2}}\lambda(\Delta_2(n,  F)) \leq 8 \psi(2^n)$ which completes the proof of the lemma.
\end{proof}

Next, we use Lemma~\ref{lem1} in order to estimate the $\mcal H^g$--measure of $\wds$.
  Our next goal is to show the inclusions
\begin{equation}\label{vb7}
  \sigma_1\subseteq \sigma_2\subseteq \Delta_1(n, F),
\end{equation}
where
\begin{align*}
  \sigma_1&=\left\{x\in\mb I: |x-\al_1|\leq \frac{\psi(2^n)}{20\cdot2^n}\right\},\\
  \sigma_2&=\left\{x\in\mb I: |x-\al_1|\leq \frac{\psi(2^n)}{2|F^\prime(\al_1)|}\right\}.
\end{align*}
Since, $1\leq |F^\prime(\al_1)|\leq 10\cdot2^n$,   it is readily verified that $\sigma_1\subseteq \sigma_2$. Next we show that
 $\sigma_2\subseteq \Delta_1(n, F)$. Let $x\in\sigma_2$, then, by Taylor's formula and the triangle inequality,
\begin{align*}
  |F(x)|&\le|F^\prime(\al_1)||x-\al_1|+ \frac{|F^{\prime\prime}(\al_1)||x-\al_1|^2}{2}\\ &
   \leq \frac{\psi(2^n)}{2}+2\cdot2^n\left(\frac{\psi(2^n)}{2|F^\prime(\al_1)|}\right)^2 \leq \frac{\psi(2^n)}{2}+2\cdot2^n\left(\frac{\psi(2^n)}{2}\right)^2\leq \psi(2^n)
\end{align*}
for $n$ large enough. Here we again have used \eqref{vb6}.

By \eqref{vb7}, we have that $\Delta( a_2, a_1)$ is a union of disjoint intervals, each of length at least $\frac{\psi(2^n)}{20\cdot2^n}$.
 Chopping the larger intervals into smaller ones, we can then represent $\Delta( a_2, a_1)$ as a union of disjoint intervals of length between $\delta:=\frac{\psi(2^n)}{20\cdot2^n}$ and $\delta/2$.

Since, by Lemma~\ref{lem1}, $\lambda(\Delta( a_2, a_1))\leq 16 \psi(2^n)$, the number of these intervals $M$ must satisfy
$$
M\delta/2\le 16\psi(2^n).
$$
Hence
$$
M\le 32\psi(2^n) \frac{20\cdot2^n}{\psi(2^n)}=640\cdot 2^n.
$$
Finally, we calculate the $g$-dimensional Hausdorff measure for the case of distinct roots:
\begin{align*}
  0\le \mcal H^g(\wds)&\leq \lim_{N\to\infty} \ \sum_{n\geq N}\ \sum_{|a_1|< 2^{n+1}}\ \sum_{0<a_2< 2^{n+1}}640\cdot2^ng\left(\frac{\psi(2^n)}{12\cdot2^n}\right)\\ &\ll
  \lim_{N\to\infty} \ \sum_{n\geq N}\ \sum_{|a_1|< 2^{n+1}}\ \sum_{0<a_2< 2^{n+1}} 2^ng\left(\frac{\psi(2^n)}{2^n}\right)
  \\&\ll\lim_{N\to\infty} \sum_{n\geq N}\left(2^n\right)^3g\left(\frac{\psi(2^n)}{2^n}\right)\asymp \lim_{N\to\infty} \sum_{t\geq N}t^2g\left(\frac{\psi(t)}{t}\right)=0.
\end{align*}

\section{Final comments and further research}

 Our approach to the problem discussed in this paper is reasonably general, though the problem itself is simple enough to comprehend. However, even for this problem, there is  space for improvement. For example, an immediate improvement would be to remove condition \eqref{zzz} from the statements of Theorem  \ref{thm5}. Another improvement could be to remove the monotonic assumption on the approximating function which looks beyond reach within the context of techniques used in this paper.
Broadly speaking, there are many open problems in the field of metric Diophantine approximation on  manifolds and every problem is involved and interesting.  We list just a few immediately related problems, which
could improve the current state of knowledge.

Beyond a parabola, one can consider a cubic curve $\nu_3=\left\{(x, x^3):x\in\mathbb R\right\}$. The real difficulty in this case arises  in bounding the derivative of the polynomial at the distinct roots.
Once the cubic case is established then it would be easy to work on a more general curve  $\nu_n=\left\{(x, x^n):x\in\mathbb R\right\}.$

An ambitious aim for further research is to prove the convergent counter part of Theorem \ref{thm4}.

\begin{pro}\label{prob1}
Let $\mcal M$ be a non-degenerate submanifold of $\mb R^n$ of dimension $m$. Let $\psi$ be an approximating function
and let $g$ be a dimension function such that $q^{-m}g(q)\to\infty$ as $q\to 0$ and $q^{-m}g(q)$ is decreasing.
 Suppose further that $q^{1-m}g(q)$ is increasing, then
$$\mcal H^g(\W\cap\mcal M)=
  0 \ \  {\rm if } \quad \sum\limits_{q=1}^{\infty}g\left(\dfrac{\psi(q)}{q}\right)\psi(q)^{1-m}q^{m+n-1}<\infty. $$
\end{pro}

As stated earlier, the complete metric theory has been established for  simultaneous Diophantine approximation on  non-degenerate planar curves--see \cite{BDV_main, BZ,  VV} and reference therein.
Therefore, with the help of the methods we used in this paper along with some other ideas, a
 realistic goal in the near future is to prove Problem 1 for the non-degenerate planar curves.

In another direction, one can also consider the inhomogeneous  Diophantine approximation on manifolds, which is considered to be the generalization of the homogeneous theory discussed so far. As in the homogeneous setup the inhomogeneous theory is almost complete for the simultaneous setup for  non-degenerate planar curves--see \cite{Badz, BVV, HT_plan} and the references therein. But for the  dual setup, the best available result is recently established by Badziahin, Beresnevich and Velani  for the $s$-dimensional analogue of Theorem \ref{thm4} under some mild convexity conditions on the approximating function--see \cite[Theorem 2]{BBV} for further details.
Firstly, a possible development could be to remove these conditions and prove the divergent result for arbitrary dimension functions. Secondly, investigation of  the convergent counter part of this problem, once established,
 would also settle Problem $1$.

%
%
%

\noindent \emph{Acknowledgments.} To be written...

\

{\footnotesize

%
%
%
\def\cprime{$'$} \def\cprime{$'$}

\end{document}